\theoremstyle{plain}
\theoremstyle{definition}
\newtheorem{remark}{Remark}
\theoremstyle{plain}
\newtheorem{theorem}{Theorem}[section]
\newtheorem{lemma}[theorem]{Lemma}
\newtheorem{corollary}[theorem]{Corollary}
\theoremstyle{definition}
\providecommand{\corollaryname}{Corollary}
\providecommand{\propname}{Proposition}
\providecommand{\definitionname}{Definition}
\providecommand{\theoremname}{Theorem}
\providecommand{\assumptionname}{Assumption}
\providecommand{\corollaryname}{Corollary}
\providecommand{\definitionname}{Definition}
\providecommand{\theoremname}{Theorem}
\newcommand{\declarecommand}[1]{\providecommand{#1}{}\renewcommand{#1}}
\declarecommand{\A}{\mathbb{A}}
\declarecommand{\B}{\mathbb{B}}
\declarecommand{\C}{\mathbb{C}}
\declarecommand{\D}{\mathbb{D}}
\declarecommand{\E}{\mathbb{E}}
\declarecommand{\F}{\mathbb{F}}
\declarecommand{\G}{\mathbb{G}}
\declarecommand{\H}{\mathbb{H}}
\declarecommand{\I}{\mathbb{I}}
\declarecommand{\J}{\mathbb{J}}
\declarecommand{\K}{\mathbb{K}}
\declarecommand{\L}{\mathbb{L}}
\declarecommand{\M}{\mathbb{M}}
\declarecommand{\N}{\mathbb{N}}
\declarecommand{\O}{\mathbb{O}}
\declarecommand{\P}{\mathbb{P}}
\declarecommand{\Q}{\mathbb{Q}}
\declarecommand{\R}{\mathbb{R}}
\declarecommand{\S}{\mathbb{S}}
\declarecommand{\T}{\mathbb{T}}
\declarecommand{\U}{\mathbb{U}}
\declarecommand{\V}{\mathbb{V}}
\declarecommand{\W}{{{W_{\tau,x}}}}
\declarecommand{\X}{\mathbb{X}}
\declarecommand{\Y}{\mathbb{Y}}
\declarecommand{\Z}{\mathbb{Z}}
\declarecommand{\SSS}{{{S}_{\tau, x}^{\eta,y}}}
\declarecommand{\Ac}{\mathcal{A}}
\declarecommand{\Bc}{\mathcal{B}}
\declarecommand{\Cc}{\mathcal{C}}
\declarecommand{\Dc}{\mathcal{D}}
\declarecommand{\Ec}{\mathcal{E}}
\declarecommand{\Fc}{\mathcal{F}}
\declarecommand{\Gc}{\mathcal{G}}
\declarecommand{\Hc}{\mathcal{H}}
\declarecommand{\Ic}{\mathcal{I}}
\declarecommand{\Jc}{\mathcal{J}}
\declarecommand{\Kc}{\mathcal{K}}
\declarecommand{\Lc}{\mathcal{L}}
\declarecommand{\Mc}{\mathcal{M}}
\declarecommand{\Nc}{\mathcal{N}}
\declarecommand{\Oc}{\mathcal{O}}
\declarecommand{\Pc}{\mathcal{P}}
\declarecommand{\Qc}{\mathcal{Q}}
\declarecommand{\Rc}{\mathcal{R}}
\declarecommand{\Sc}{\mathcal{S}}
\declarecommand{\Tc}{\mathcal{T}}
\declarecommand{\Uc}{\mathcal{U}}
\declarecommand{\Vc}{\mathcal{V}}
\declarecommand{\Wc}{\mathcal{W}}
\declarecommand{\Xc}{\mathcal{X}}
\declarecommand{\Yc}{\mathcal{Y}}
\declarecommand{\Zc}{\mathcal{Z}}
\newcommand{\1}{\mathbb{1}}
\newcommand{\eps}{\varepsilon}
\DeclareMathOperator{\erf}{erf}
\DeclareMathOperator{\sign}{sign}
\title{On the Number of Faces and Radii of Cells Induced by Gaussian Spherical Tessellations
\thanks{A.M. was partially supported by U.S. Air Force Award FA9550-18-1-0031 led by Roman Vershynin. R.S. was supported in part by NSF DMS-1517204 and DMS-2012546.}
}
\author{
    \IEEEauthorblockN{Eric Lybrand (elybrand@ucsd.edu)\\}
    \IEEEauthorblockA{Department of Mathematics, University of California, San Diego\\
    }\medskip
    \and
    \IEEEauthorblockN{Anna Ma (anna.ma@uci.edu)\\}
    \IEEEauthorblockA{Department of Mathematics, University of California, Irvine\\
    }
    \and \medskip
    \IEEEauthorblockN{Rayan Saab (rsaab@ucsd.edu)\\}
    \IEEEauthorblockA{Department of Mathematics and Halicioglu Data Science Institute, \\ University of California, San Diego
    }
}
\begin{document}

\maketitle
\begin{abstract}
We study a geometric property related to spherical hyperplane tessellations in \(\R^{d}\). We first consider a fixed $x$ on the Euclidean sphere and tessellations with \(M \gg d\) hyperplanes passing through the origin having normal vectors distributed according to a Gaussian distribution. We show that with high probability there exists a subset of the hyperplanes whose cardinality is on the order of \(d\log(d)\log(M)\) such that the radius of the cell containing \(x\) induced by these hyperplanes is bounded above by, up to constants, \(d\log(d)\log(M)/M\). We extend this result to hold for all cells in the tessellation with high probability. Up to logarithmic terms, this upper bound matches the  previously established lower bound of Goyal et al. (IEEE T. Inform. Theory 44(1):16–31, 1998).
\end{abstract}

\section{Introduction}
\noindent
Suppose we have a collection of non-zero vectors \(\varphi^{(1)}, \hdots, \varphi^{(M)} \in \R^{d}\) and we consider the hyperplane tessellation these vectors induce on the unit sphere \(S^{d-1}\). One might be tempted to believe that the number of cells formed by the hyperplane tessellation --that is, for \(x \in S^{d-1}\) with \(\langle \varphi^{(i)}, x\rangle \neq 0\) for all \(i \in [M]\), sets of the form \(\{y \in S^{d-1} : \sign(\langle x, \varphi^{(i)} \rangle) = \sign(\langle y, \varphi^{(i)} \rangle) \,\, \text{ for all } i \in [M]\}\)-- is exponential in \(M\). However, it is not difficult to see that this is far from the truth when \(M\) is much larger than \(d\), i.e., $M\gg d$. In this setting and when the \(\varphi^{(i)}\) are in general position, or when any \(d\) of them are linearly independent, Schl\"afli proved, in the 1800s, that the number of cells is exactly
\begin{align}\label{eq: number of cells}
    2\sum_{i=0}^{d-1}\binom{M-1}{i}.
\end{align}
See \cite{schneider2008stochastic}  for a version of Schl\"afi's proof and for further references. To tease out what the asymptotic properties of this quantity are in terms of $M$ and $d$, note that for any
\(\frac{M+1}{2} > d\) the leading term in the sum is the final summand. Notice that
\begin{align}\label{eq: sum of binom upper bound}
    &\frac{{M \choose d} + {M \choose d-1} + \hdots + {M \choose 0}}{{M\choose k}}
    \leq \sum_{j=0}^{\infty} \left( \frac{d}{M-d+1} \right)^j = \frac{M-d+1}{M-2d+1}.
\end{align}
Therefore, we see that \(\sum_{i=0}^{d}{M\choose i} \leq {M\choose d} \cdot \frac{M-d+1}{M-2d+1}\).

When \(M \gg d\), we can upper bound \eqref{eq: number of cells} by \(C_1\binom{M-1}{d-1}\) which by a Sterling bound is bounded above by \(C_2(eM/d)^d\), where \(C_1, C_2 > 0\) are absolute constants. With so much redundancy a natural question to ask is how many faces\footnote{Here, we are referring to faces of maximal dimension, which are referred to as facets in other works.} delimit a cell in such a tessellation? Further, how large are the radii of the cells? The first question has seen a recent resurgence of interest \cite{hug2016random, hug2021another, godland2020random} and was initially addressed by Cover and Efron \cite{cover1967geometrical}. More specifically, it was shown in \cite[Eq. (4.1)]{cover1967geometrical} that under the assumption that the \(\varphi^{(i)}\) are in general position, the expected number of faces for a cell drawn uniformly at random from the tessellation is
\begin{align}\label{eq: expected number of faces}
    \frac{2 \cdot M \cdot \sum_{i=0}^{d-2} {M-2\choose i}}{\sum_{i=0}^{d-1} {M-1\choose i}}.
\end{align}
Using \eqref{eq: sum of binom upper bound} to upper bound \eqref{eq: expected number of faces} gives us
\begin{align}\label{eq: expected number of faces, simplified}
     &\frac{2 \cdot M \cdot \sum_{i=0}^{d-2} {M-2\choose i}}{\sum_{i=0}^{d-1} {M-1\choose i}} \leq \frac{2 \cdot M \cdot {M-2 \choose d-2} \cdot \frac{M-d+1}{M-2d+3}}{{M-1 \choose d-1}}\nonumber\\
    &\leq C_3 \cdot 2 \cdot M \cdot \frac{(M-2)!}{(d-2)!(M-d)!} \cdot \frac{(d-1)!(M-d)!}{(M-1)!} \nonumber\\
    &= C_3 \cdot 2 \cdot M \cdot \frac{d-1}{M-1} \leq C_4 d,
\end{align}
where, again, \(C_3, C_4 > 0\) are absolute constants. In other words, we can expect there to be no more than on the order of \(d\) hyperplanes to form a cell drawn uniformly from our tessellation when there are \(M \gg d\) hyperplanes. In fact, we can find a matching lower bound by a similar argument which proves that the average cell has a number of faces that scales like \(d\).

Unlike the setting of \cite{cover1967geometrical} we consider these two questions in the specific case when the \(\varphi^{(i)}\) are distributed according to a Gaussian distribution. We seek an upper bound (that holds with high probability on the draw of the \(\varphi^{(i)}\))  on the radius of cells formed by a collection of hyperplanes whose cardinality, up to constants and logarithmic terms, matches the bound \eqref{eq: expected number of faces, simplified}. Initially, we refer to a fixed cell by first fixing a point \(x \in S^{d-1}\), drawing the vectors \(\varphi^{(i)}\) independently of \(x\), and considering the cell containing $x$. We then pass to a uniform result where the result holds with high probability for all points \(x\) on the sphere.

\section{Background and connections} 
\noindent
The geometric questions listed above play an important role in the theory of quantization for finite frame expansions (see, e.g., \cite{goyal1995quantization, powell2013quantization}) and binary classification with linear separators \cite{vapnik1974theory}. These two contexts are dual to one another in the sense that for binary classifiers with linear separators the vector \(x\) is thought of as normal to a separating hyperplane which classifies the vectors \(\varphi^{(i)}\), into one of two classes, under the mapping \(\varphi^{(i)} \mapsto \sign(\langle x, \varphi^{(i)}\rangle)\). Notice that the mapping \(\varphi^{(i)} \mapsto \sign(\langle x, \varphi^{(i)}\rangle)\) is invariant under positive scalings of \(x\) so that we may assume without loss of generality \(\|x\|_2 = 1\). Frame quantization on the other hand views this problem in the setting as we have phrased it where the \(\varphi^{(i)}\) induce a spherical hyperplane tessellation and the goal is to recover an approximation of \(x\) from quantized measurements \(\sign(\langle x, \varphi^{(i)}\rangle)\). For example, such an approximation may take the form of a vector ${x^\sharp}$ from the same cell as $x$.

Our geometric intuition concerning the radii of the cells in the tessellation is guided by a result from quantization of finite frame expansions. In that context, the so-called measurement vectors \(\varphi^{(i)}\) are independently and randomly distributed, exhibit a correlation structure arising from structured random matrix ensembles, or arise from some deterministic construction. It was in this setting that Goyal et al. proved that if the \(\varphi^{(i)}\) form a frame (i.e., a spanning set for $\R^d$), not necessarily random, and \(x\) is taken to be random according to some distribution \(p\) over a set \(D\) then the root mean-squared error for recovering \(x\) using \(M\) quantized frame coefficients \((\sign(\langle x, \varphi^{(i)}\rangle))_{i=1}^{M}\) can decay \textit{no faster than} \(C \frac{1}{M}\) where \(C\) is a constant that depends on \(d, p, D\), but not $M$ \cite[Proposition~2]{goyal1995quantization}. The \(\varphi^{(i)}\) form a frame with probability \(1\) when they are i.i.d. Gaussian. Consequentially, this result essentially tells us that the largest cell in the induced tessellation has an inscribed ball of radius on the order of \(\frac{1}{M}\). The fact that the quantization error can only decay linearly, rather than exponentially with $M$, has motivated the design of more sophisticated quantization schemes, such as $\Sigma \Delta$ or noise shaping quantizers (see, e.g, \cite{powell2013quantization, chou2015noise}), and non-adaptive universal one-bit quantizers \cite{boufounos2011universal}. Herein, we will demonstrate that for any \(x \in S^{d-1}\) there exists a sub-collection of \(C_5 d\log(d)\log(M)\) hyperplanes such that the radius of the cell containing \(x\) induced by this sub-collection is at most on the order of  \(d\log(d)\log(M)/M\). In other words, relatively few vectors $\varphi^{(i)} $ identify the cell.

Binary classification with linear separators is often known under the moniker of the support vector machine. The support vector machine was first proposed in \cite{vapnik1974theory}. Various works have analyzed this related problem. Importantly, the results that we are aware of differ slightly from the geometric context that we have posed. Namely, most support vector machine results are concerned with the scenario where the spherical hyperplane tessellation is not fixed beforehand. In this dual setting, results are typically formulated in terms of how many point samples \(\varphi^{(i)}\) with labels \(\sign(\langle \varphi^{(i)}, x\rangle)\) are required to learn a separating hyperplane with normal vector \(x^\sharp\) so that the probability of a new point \(\varphi_{M+1}\) being incorrectly classified according to \(\sign(\langle \varphi_{M+1}, x^\sharp \rangle)\) is less than \(\eps\). In the primal setting, a separating hyperplane \(x^\sharp\) is any point in the same cell as \(x\) induced by the spherical tessellation from \(\varphi^{(1)}, \hdots, \varphi^{(M)}\). Learning an \(\eps\)-accurate classifier amounts to a choice of \(x^\sharp\) so that with probability less than \(\eps\) a newly drawn hyperplane with normal vector \(\varphi^{(M+1)}\) excludes \(x^\sharp\) from the new cell with \(x\) in it. For further reading, one may look at \cite{long1995sample, angluin1988queries, campbell2000query, tong2001support, balcan2007margin, dasgupta2008general} for example.

\section{Notation}
\label{sec:problem}
\noindent
Henceforth, \(\langle \cdot, \cdot \rangle\) denotes the standard inner product over \(\R^d\). \(S^{d-1}\) and \(B_2^d(x, \eps)\) denote the unit sphere and the ball of radius \(\eps\) centered at \(x\) in \(\R^d\), respectively. The vector \(e_i \in S^{d-1}\) denotes the \(i^{th}\) standard basis vector in \(\R^d\) and the matrix $I_d$ denotes the $d \times d$ identity matrix. For any vector \(z \in \R^d\), define the vector \(z_{[-1]} \in \R^{d-1}\) to be the projection of \(z\) which removes the first coordinate and  let \(z_j \in \R\) denote the $j^{th}$ entry of $z$. For any integer \(n \in \N\), we let \([n] = \{1, 2, \hdots, n\}\). The relation \(a \gtrsim b\) denotes that there exists some universal constant \(C > 0\) so that \(a \geq Cb\). Similarly, the relation \(a \gg b\) denotes that there exists some universal constant \(C > 1\) so that \(a \geq Cb\). In order to avoid cumbersome notation, we reserve the symbols \(C, C', c, c'\) and so forth for universal constants. For a random variable $X$, we define the Orlicz norm (see \cite[Section~2.7.1]{vershynin2018high}) as \begin{align*}
         \|X\|_{\psi_2} = \inf \left\{ \omega > 0 : \E e^{X^{2}/\omega^2} \leq 2 \right\},
    \end{align*} 
while for a random vector $X\in\R^d$, we define it as 
\begin{align*}
         \|X\|_{\psi_2} = \sup_{x\in S^{d-1}} \|\langle X, x\rangle \|_{\psi_2}.
    \end{align*}

With this notation, we define and approach our problem as follows. Let \(x \in S^{d-1}\) be some fixed unit vector in \(\R^d\), and suppose we draw \(M \gg d\) i.i.d. vectors \(g^{(i)} \sim \Nc(0, d^{-1} I)\) at random and are given the sign pattern
\begin{align*}
    y_i = \sign(\langle g^{(i)}, x \rangle), \quad i \in [M].
\end{align*}
The discussion in the introductory section tells us that the \(M\) measurements are highly redundant and that one should be able to infer which cell \(x\) is in from much fewer measurements. Given the derivation in \eqref{eq: expected number of faces, simplified} we might expect the number of hyperplanes forming the cell containing \(x\) to scale like \(d\). In light of this, we consider sub-selecting a set $S \subset [M]$ of size \(m \ll M\) from the i.i.d. vectors $g^{(i)}$ so that with high probability
\begin{align*}
    \sign(\langle g^{(i)}, x \rangle) = \sign(\langle g^{(i)}, y \rangle)~\forall~  i\in S\implies \|x - y\|_2 \text{ is small.}
\end{align*}
More specifically, we consider sets of the form
\begin{equation}\label{eq:Stau}
    T_{\tau, x} := \{i \in [M] : |\langle g^{(i)}, x \rangle| < \tau\}.
\end{equation}

\noindent
Our goal is to choose \(\tau\) so that
\(T_{\tau,x}\) is large enough to define a spherical polytope containing $x$ with a radius on the order of the smallest cell containing \(x\). That is, we would like to choose \(\tau\) so that \(\{y \in S^{d-1} : \sign(\langle \varphi^{(i)}, x \rangle) = \sign(\langle \varphi^{(i)}, y \rangle) \,\, \text{for all } i \in [M]\} \subset \{y \in S^{d-1} : \sign (\langle \varphi^{(i)}, x \rangle) = \sign(\langle \varphi^{(i)}, y \rangle) \,\, \text{for all } i \in T_{\tau, x}\}\). On the other hand, we also wish \(T_{\tau,x}\) to have cardinality roughly on the order of \(d\). Our main results show that, indeed, this is possible with high probability.

\section{Results}
\noindent
The main result of \cite[Proposition~2]{goyal1995quantization} tells us that using \(M\) hyperplanes there exists a point \(x \in S^{d-1}\) and a point \(z\) in the same cell as \(x\) which satisfy \(\|x - z\|_2 \gtrsim \frac{d}{M}\). Theorem \ref{thm: main} shows that if we were to have access to the sign patterns from the set \(T_{\tau, x}\), then it is possible to find a matching upper bound up to an additional factor of \(\log(d)\log(M)\) with only \(C d\log(d)\log(M)\) hyperplanes. 

\label{sec:results}
\begin{theorem}\label{thm: main}  Fix $x \in S^{d-1}$  and draw i.i.d. vectors $g^{(i)} \sim \mathcal{N}(0, d^{-1}I_{d})$ for $i \in [M]$ with \(\frac{M}{\log(M)} \gtrsim d \log(d)\). 
Select once and for all a collection of \(C_1 d\log(d)\) indices independent of \(x\) without replacement from \([M]\) and put them in a set called \(V\). Choose $$\tau = C_2 \frac{\sqrt{d} \log(d) \log(M)}{M},$$ and define the sets \(\W := \{i \in [M] : -\tau < \langle g^{(i)}, x \rangle < 0\}\), \(S_{\tau,x } := \W \cup V\). Then, with probability at least $1 - 3\exp (-c d \log (d)) - \exp(-c' d\log(M))$ the following is true: \begin{align*}
    C_3 d\log(d)\log(M) \leq |S_{\tau,x }| \leq C_4 d\log(d)\log(M),
\end{align*}
and if, for $y \in S^{d-1}$, \( \sign(\langle g^{(i)}, x \rangle) = \sign(\langle g^{(i)}, y \rangle) \text{ for all } i\in S_{\tau,x}\) then 
\begin{align}\label{eq: cell radius bound, fixed x}
    \|x - y\|_2 \leq \frac{C_5 d\log(d)\log(M)}{\sqrt{M^2 + d^2\log^2(d)\log^2(M)}}.
\end{align}
\end{theorem}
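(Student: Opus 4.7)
The proof splits into a cardinality estimate on $|S_{\tau,x}|$ and a two-stage radius estimate: a coarse bound from $V$ and a refinement from $W_{\tau,x}$. For the cardinality I note $|W_{\tau,x}| \leq |S_{\tau,x}| \leq |W_{\tau,x}| + |V|$, with $|V| = C_1 d\log(d)$ deterministic, so it suffices to control $|W_{\tau,x}|$. The latter is $\mathrm{Binomial}(M,p)$ with $p = \P(\langle g, x\rangle \in (-\tau, 0)) \asymp \tau\sqrt{d}$ since $\langle g, x\rangle \sim \mathcal{N}(0, 1/d)$ and the choice of $\tau$ makes $\tau\sqrt{d}$ small. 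Thus $\E |W_{\tau,x}| \asymp d\log(d)\log(M)$, and a Chernoff bound pins $|W_{\tau,x}|$ between $C_3$ and $C_4$ multiples of this mean with failure probability at most $2\exp(-c d\log(d)\log(M)) \leq 2\exp(-cd\log(d))$; adding $|V|$ gives the stated two-sided bound on $|S_{\tau,x}|$.

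For the coarse radius bound, $V$ is fixed independently of $x$, so the $|V|$ Gaussian vectors $\{g^{(i)}\}_{i \in V}$ tessellate $S^{d-1}$ blindly to $x$. A Plan--Vershynin-style random tessellation result gives, for $|V| \gtrsim d\log(d)$, that with probability at least $1 - \exp(-cd\log(d))$ every cell of this tessellation has $\ell_2$-diameter at most some absolute $c_0 \in (0,1)$. In particular, any $y$ with matching signs on $V$ already satisfies $\|x-y\|_2 \leq c_0$.

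For the refinement I decompose $g^{(i)} = T_i x + G^{(i)}$ with $T_i := \langle g^{(i)}, x\rangle \sim \mathcal{N}(0, 1/d)$ and $G^{(i)} := P_{x^\perp} g^{(i)} \sim \mathcal{N}(0, d^{-1} P_{x^\perp})$, jointly independent in $i$ and between the two components. The set $W_{\tau,x}$ is $(T_i)$-measurable, and conditional on the $T_i$ the $G^{(i)}$ remain independent centered Gaussians. Writing $y = \langle y, x\rangle x + y_\perp$, the event ``signs agree at $i \in W_{\tau,x}$'' (where $T_i = -s_i$, $s_i \in (0,\tau)$) is $\{\langle G^{(i)}, y_\perp\rangle < \langle y, x\rangle s_i\}$, with conditional probability $\Phi(\langle y, x\rangle s_i \sqrt{d}/\|y_\perp\|_2)$. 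Setting $\rho_0 := C_5 d\log(d)\log(M)/M$ and restricting to $\rho := \|x-y\|_2 \in [\rho_0, c_0]$, the identity $\|y_\perp\|_2 = \rho\sqrt{1 - \rho^2/4} \asymp \rho \geq \rho_0$ bounds this probability by $\Phi(C\tau\sqrt{d}/\rho_0) = \Phi(C/C_5)$; choosing $C_5$ large makes this $\leq 1 - \epsilon_0$ for an absolute $\epsilon_0 > 0$. Multiplying over the at least $C_3 d\log(d)\log(M)$ indices in $W_{\tau,x}$, the probability that any fixed such $y$ has full sign agreement is at most $\exp(-c d\log(d)\log(M))$.

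Finally, to pass to a uniform statement over the spherical annulus $\{y \in S^{d-1} : \rho_0 \leq \|x-y\|_2 \leq c_0\}$, I take an $\eta$-net $\mathcal{N}$ with $\eta \asymp \rho_0$ and $|\mathcal{N}| \leq \exp(Cd\log(M))$. On the high-probability event $\max_i \|g^{(i)}\|_2 \leq 2$ (whose complement fits in the $\exp(-cd\log(d))$ budget), a sign flip between any $y$ and its closest net point $y^*$ requires $|\langle g^{(i)}, y^*\rangle| \leq 2\eta$, so replacing the raw agreement event by the margin-strengthened version $\langle g^{(i)}, y^*\rangle < -2\eta$ controls the discretization at the cost of only a constant worsening of the Gaussian tilt. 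A union bound over $\mathcal{N}$ then yields total failure probability $\exp(Cd\log(M) - c d\log(d)\log(M)) \leq \exp(-c' d\log(M))$, where the hypothesis $M/\log(M) \gtrsim d\log(d)$ is precisely what lets the $\log(d)$ factor absorb the net's entropy. Combining the $\rho \leq \rho_0$ bound on the annulus with the $\rho \leq c_0$ bound on its complement, and consolidating constants, produces the stated $C_5 d\log(d)\log(M)/\sqrt{M^2 + d^2\log^2(d)\log^2(M)}$ (which interpolates between the $\tau$-regime and the constant-$c_0$ regime). The main obstacle is the discretization/tilt balance: the margin $2\eta$ must be small enough that $\Phi(C/C_5) < 1$ survives, yet $\mathcal{N}$ must be sparse enough for the union bound to close, and $M/\log(M) \gtrsim d\log(d)$ is exactly what reconciles the two.
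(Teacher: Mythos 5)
Your argument is a genuinely different route from the paper's. The paper, after reducing to a lower bound on $y_1$ via the feasibility inequality, goes to considerable trouble to verify that both sides of that inequality are positive (Lemma~\ref{lem: feasible y, in the same half space UNIFORM} and Corollary~\ref{cor: tail inner products positive, fixed y}), squares it, and then lower-bounds $\langle g^{(i)}_{[-1]}, y_{[-1]}/\|y_{[-1]}\|_2\rangle^2$ uniformly via a random-matrix concentration estimate (Lemmas~\ref{lem: smallest singular value Phi_y, uniform} and \ref{lem: minimum singular value}). You instead condition on $(T_i)$, observe that $W_{\tau,x}$ is $(T_i)$-measurable, and directly estimate the conditional per-index agreement probability $\Phi(\langle y,x\rangle s_i\sqrt{d}/\|y_\perp\|_2)$, multiplying over $W_{\tau,x}$ and union-bounding over a net. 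This avoids the min-singular-value machinery entirely and, because you never need to square the feasibility inequality, you do not actually need the careful positivity bookkeeping of the paper's Corollary~\ref{cor: tail inner products positive, fixed y}. The cardinality step and the role of $V$ (as a source of a coarse, $x$-independent bound via a Jacques/Plan--Vershynin-type tessellation result) match the paper.

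There is, however, a quantitative error in your discretization that, as written, makes the tilt argument fail. First, $\max_i\|g^{(i)}\|_2 \leq 2$ is not a high-probability event unless $d\gtrsim\log M$; the correct scale (used in the paper) is $\max_i\|g^{(i)}\|_2\lesssim\sqrt{\log M}$ with failure probability $\exp(-cd\log M)$, so the net error at each index is of size $\lesssim\eta\sqrt{\log M}$, not $2\eta$. More importantly, even accepting a margin of size $\eta$, the conditional probability of the margin-relaxed event $\langle g^{(i)}, y^*\rangle < \eta$ is $\Phi\bigl((s_i\langle x,y^*\rangle + \eta)\sqrt{d}/\|y^*_\perp\|_2\bigr)$, whose extra contribution $\eta\sqrt{d}/\|y^*_\perp\|_2$ must be $O(1)$ for the ``constant worsening of the Gaussian tilt'' to hold. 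With $\|y^*_\perp\|_2\asymp\rho_0$ at the inner edge of the annulus, this forces $\eta\lesssim\rho_0/\sqrt{d\log M}$. Your choice $\eta\asymp\rho_0$ gives a tilt of order $\sqrt{d\log M}$, driving $\Phi(\cdot)\to 1$ and destroying the $(1-\epsilon_0)^{|W_{\tau,x}|}$ bound. The fix is straightforward --- take $\eta\asymp\rho_0/\sqrt{d\log M}$; then $\log|\mathcal{N}|\lesssim d\log(M\sqrt{d\log M}/(d\log d\log M))\lesssim d\log M$, so the union bound still closes against $\exp(-cd\log(d)\log(M))$ under the standing assumption $M/\log M\gtrsim d\log d$ --- but as stated the step is incorrect. (Also, the margin direction should be $\langle g^{(i)}, y^*\rangle < +2\eta$, not $<-2\eta$; the strengthened event must be implied by agreement for $y$, not imply it.)
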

 \noindent
 We can understand Theorem \ref{thm: main} using some simple geometric intuition. The vectors \(g^{(i)}\) whose inner products with \(x\) are the smallest in absolute value determine the normal hyperplanes which are closest to \(x\). So, these hyperplanes are the most informative with respect to determining the faces of the cell in the full tessellation induced by all of the vectors \(g^{(1)}, \hdots, g^{(M)}\). However, because we only have finitely many \(g^{(i)}\) we need to choose \(\tau\) carefully so that the set \(\W = \{i \in [M] : -\tau < \langle g^{(i)}, x \rangle < 0\}\) is non-empty. 
 
 Our proof of Theorem \ref{thm: main} relies on reducing the bound \eqref{eq: cell radius bound, fixed x} to a lower bound on the minimal singular value of a particular random matrix. To see how, we provide the following quick sketch. By rotation invariance of the \(g^{(i)}\), let's assume \(x = e_1\). Notice we have \(\|x - y\|_2^2 = 2 - 2 y_1\) for any \(y \in S^{d-1}\). So, to control \(\max_y\|x-y\|_2^2\) where \(y\) is such that \( \sign(\langle g^{(i)}, x \rangle) = \sign(\langle g^{(i)}, y \rangle) \text{ for all } i\in S_{\tau,x}\), we just need a uniform lower bound on \(y_1\) for such \(y\). This lower bound will come from the feasibility condition
\begin{align}\label{eq: feasibility}
    \langle g^{(i)}, x\rangle \langle g^{(i)}, y\rangle = g^{(i)}_1  \langle g^{(i)}, y\rangle > 0  \,\, \text{ for all } i \in S_{\tau, x} \nonumber\\
    \iff(g^{(i)}_1)^2 y_1 > - g^{(i)}_1 \langle g^{(i)}_{[-1]}, y_{[-1]} \rangle, \,\, \text{ for all } i \in S_{\tau, x}.
\end{align}
 Now, assuming \(g_1^{(i)} \neq 0\), \textit{if we could preserve \eqref{eq: feasibility} by squaring both sides -- for example if both sides were both positive --} we would end up with an inequality of the form
 \begin{align}\label{eq: eric dreams of squaring}
     g^{(i) 2}_1 y_1^2 > \|y_{[-1]}\|_2^2 \left\langle g^{(i)}_{[-1]},  \frac{y_{[-1]}}{\|y_{[-1]}\|_2} \right\rangle^2,  \,\, \text{ for all } i \in S_{\tau, x}.
 \end{align}
 Using the fact that \(\|y_{[-1]}\|_2^2 = 1 - y_1^2\) and \(|g^{(i)}_1| < \tau\) on the subset of vectors \(\W \subset S_{\tau, x}\) as defined in Theorem \ref{thm: main}, we could collect like terms to get
 \begin{align*}
    y_1^2 \geq 1 - \frac{\tau^2}{\left\langle g^{(i)}_{[-1]}, \frac{y_{[-1]}}{\|y_{[-1]}\|_2} \right\rangle^2 + \tau^2}, \,\, \text{ for all } i \in \W \subset S_{\tau, x}.
 \end{align*}
 From here, all we would need to do is find a uniform lower bound for \(\left\langle g^{(i)}_{[-1]}, \frac{y_{[-1]}}{\|y_{[-1]}\|_2} \right\rangle^2\) which is where random matrix theory would come into play. The only way this strategy works is if we could  show that the inequalities \eqref{eq: feasibility} and \eqref{eq: eric dreams of squaring} were essentially equivalent. Of course, if both sides of \eqref{eq: feasibility} were positive we would be in great shape. The terms we have to worry about are \(y_1\) and \(\langle g^{(i)}_{[-1]}, y_{[-1]}\rangle\). Our goal is to show that we can throw away a constant proportion of the vectors in \(S_{\tau, x}\) where the two conditions \(y_1 > 0\) and \(\langle g^{(i)}_{[-1]}, y_{[-1]}\rangle > 0\) do not hold and still get the result of Theorem \ref{thm: main}. Lemma \ref{lem: feasible y, in the same half space UNIFORM} handles the condition \(y_1 > 0\) and Corollary \ref{cor: tail inner products positive, fixed y} handles the  condition \(\langle g^{(i)}_{[-1]}, y_{[-1]}\rangle > 0\). Besides these two results, proving Theorem \ref{thm: main} requires two main ingredients. First, Lemma \ref{lem: S_tau}
 characterizes how \(\tau\) should be chosen to ensure that, with high probability, \(|S_{\tau,x}|\) is about \( d\log(d)\log(M)\). This, along with Lemma \ref{lem: smallest singular value Phi_y, uniform} finish the heavy lifting needed to lower bound \(\left\langle g^{(i)}_{[-1]}, \frac{y_{[-1]}}{\|y_{[-1]}\|_2} \right\rangle^2\). If not defined explicitly in the statements below, we note that variables are as in Theorem \ref{thm: main}.
 
\begin{lemma}\label{lem: S_tau} If $\tau = C_2 \frac{\sqrt{d}\log(d) \log(M)}{M}$, then with probability at least $1 - \exp(-c d\log(d) \log(M))$ we have \(C_3d\log(d)\log(M) \leq |S_{\tau,x}| \leq C_4d\log(d)\log(M)\).
\end{lemma}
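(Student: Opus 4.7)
The plan is to reduce the claim to a concentration bound on $|\W|$ alone and then apply a multiplicative Chernoff estimate. Since $S_{\tau,x} = \W \cup V$ with $V$ deterministic and $|V| = C_1 d\log(d)$, we have $|\W| \leq |S_{\tau,x}| \leq |\W| + C_1 d\log(d)$, and because $d\log(d) \leq d\log(d)\log(M)$, it suffices to prove two-sided bounds of the form $c_1 d\log(d)\log(M) \leq |\W| \leq c_2 d\log(d)\log(M)$ with the stated probability; the lemma then follows with $C_3 = c_1$ and $C_4 = c_2 + C_1$.

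Next, I would compute $p := \P(i \in \W)$ for a fixed index $i$. Since $\|x\|_2 = 1$ and $g^{(i)} \sim \mathcal{N}(0, d^{-1} I_d)$, the scalar $\langle g^{(i)}, x\rangle$ is $\mathcal{N}(0, 1/d)$, so by symmetry of the Gaussian and the change of variables $u = y\sqrt{d}$,
\[ p = \P\bigl(0 < \langle g^{(i)}, x\rangle < \tau\bigr) = \int_0^{\tau\sqrt{d}} \frac{1}{\sqrt{2\pi}}\, e^{-u^2/2}\,du. \]
The hypothesis $M/\log(M) \gtrsim d\log(d)$ guarantees $\tau\sqrt{d} = C_2 d\log(d)\log(M)/M \leq 1$ provided $C_2$ is chosen small enough. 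On $[0,1]$ the standard normal density is bounded above by $(2\pi)^{-1/2}$ and below by a positive absolute constant, so $p$ is comparable to $\tau\sqrt{d}$ up to absolute constants; in particular $\E|\W| = Mp$ is of order $d\log(d)\log(M)$.

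Finally, $|\W| = \sum_{i=1}^M \mathbf{1}\{-\tau < \langle g^{(i)}, x\rangle < 0\}$ is a sum of $M$ i.i.d.\ Bernoulli$(p)$ random variables, so the multiplicative Chernoff bound gives, for any fixed $\delta \in (0,1)$,
\[ \P\bigl(\bigl| |\W| - Mp \bigr| > \delta Mp\bigr) \leq 2\exp\bigl(-\delta^2 Mp/3\bigr). \]
Taking $\delta = 1/2$ and using $Mp \gtrsim d\log(d)\log(M)$, the failure probability is bounded by $\exp(-c\, d\log(d)\log(M))$, which delivers the desired two-sided bound on $|\W|$ and hence, by the first paragraph, on $|S_{\tau,x}|$. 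The only subtle step is the two-sided estimate on $p$, which requires $\tau\sqrt{d}$ to stay below a fixed threshold so that the Gaussian density remains uniformly bounded away from zero on $[0, \tau\sqrt{d}]$; the assumption $M/\log(M) \gtrsim d\log(d)$ is tailored precisely for this, and this is the only place it enters the argument.
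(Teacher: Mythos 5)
Your proof is correct and follows essentially the same route as the paper's: reduce to concentration of $|\W|$, compute the single-index probability $p$, observe it is comparable to $\tau\sqrt{d}$, and apply a multiplicative Chernoff bound. The one small stylistic improvement you make is bounding the Gaussian density above and below on $[0,\tau\sqrt{d}]$ once you note $\tau\sqrt{d}\le 1$; the paper instead works through $\erf(\tau\sqrt{d}/\sqrt{2})$ with Riemann-sum estimates that produce an extra $e^{-\tau^2 d/2}$ factor it must then argue away, so your handling of that step is a touch cleaner but not a different argument.
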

\begin{proof}
As in the statement of Theorem \ref{thm: main} let \(\W := \{i \in [M] : -\tau < \langle g^{(i)}, x \rangle < 0\}\). Note that \(\W \subset S_{\tau, x}\) and 
\begin{align}\label{eq: S_t}
    |\W| = \sum_{i=1}^{M} \mathbb{1}\left( |\langle g^{(i)}, x\rangle| < \tau \right) \1\left( \langle g^{(i)}, x\rangle < 0 \right).
\end{align}
 We remark that the events defined in the indicator functions are independent. Further, each \(\1\left( |\langle g^{(i)}, x\rangle| < \tau \right)\) is a Bernoulli random variable with mean \(\P\left( |\langle g^{(i)}, x\rangle| < \tau\right)\). By our choice of normalization, \(\langle g^{(i)}, x\rangle~\sim~\Nc(0,d^{-1})\), so we have \(\P\left( |\langle g^{(i)}, x\rangle| < \tau\right) = \erf\left(\frac{\tau\sqrt{d}}{\sqrt{2}}\right)\), where \(\erf\) denotes the Gauss error function
\begin{align*}
    \erf(t) = \frac{1}{\sqrt{\pi}}\int_{-t}^{t} e^{-x^2}\,\,dx.
\end{align*} 
Using a multiplicative Chernoff bound \cite{vershynin2018high}
for any \(s \in (0,1]\)
\begin{align*}
    \P&\left( \left| |\W| - \frac{M}{2}\erf\left(\frac{\tau\sqrt{d}}{\sqrt{2}}\right) \right| \geq s \frac{M}{2}\erf\left(\frac{\tau\sqrt{d}}{\sqrt{2}}\right) \right) \\
    &\leq 2\exp\left(-c s^2 M\erf\left(\frac{\tau\sqrt{d}}{\sqrt{2}}\right) \right) \leq 2\exp\left(-c s^2 M \tau \sqrt{d} e^{-\tau^2 d/2}\right).
\end{align*}
More specifically, this tells us that with high probability \(|\W| \geq c' M\erf\left(\frac{\tau\sqrt{d}}{\sqrt{2}}\right) \geq c^{''}M\tau\sqrt{d} \exp(-\tau^2 d/2) \geq c^{'''}d\log(d) \log(M)\) using a lower Riemann sum estimate to lower bound \(\erf\) in the penultimate inequality. Notice that the term $e^{-\tau^2 d/2}$ does not pose any problems, since \(\tau^2 d/2 = C_2 d\log(d)\log(M)/2M\) and we assume that \(M \gtrsim d\log(d)\log(M)\). Analogously, we also have using an upper Riemann sum estimate \(|\W| \leq C M\erf\left(\frac{\tau\sqrt{d}}{\sqrt{2}}\right) \leq C'M\tau\sqrt{d} = C''d\log(d)\log(M)\). Since \(S_{\tau, x} = \W \cup V \) and \(|V| = C_1 d\log(d)\), the claim then follows.
\end{proof}

\begin{lemma}\label{lem: feasible y, in the same half space UNIFORM}
    Let \(A \in \R^{C_1d\log(d) \times d}\) be a matrix whose rows are populated with the vectors in \(V\), which is defined in the statement of Theorem \ref{thm: main}. Then for any \(x, y \in S^{d-1}\), the following is true with probability exceeding \(1-e^{-cd\log(d)}\): \(\sign(Ax) = \sign(Ay)\) implies \(\|x-y\|_2 < \sqrt{2}\), and consequently \(\langle x, y \rangle > 0\).
\end{lemma}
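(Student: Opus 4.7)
I would prove the contrapositive: with probability at least $1 - e^{-cd\log(d)}$, for every pair $(x,y)\in S^{d-1}\times S^{d-1}$ with $\langle x,y\rangle\leq 0$ there exists a row $g^{(i)}$ of $A$ with $\sign(\langle g^{(i)},x\rangle)\neq \sign(\langle g^{(i)},y\rangle)$. Since $\|x-y\|_2^2=2-2\langle x,y\rangle$, the conclusion $\|x-y\|_2<\sqrt{2}$ on the sphere is equivalent to $\langle x,y\rangle>0$, so the contrapositive implies the lemma.

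\textbf{Two net events.} The pointwise ingredient is standard: for fixed $(x,y)$ with angle $\theta:=\theta(x,y)\geq\pi/2$, rotational invariance of $g\sim\Nc(0,d^{-1}I_d)$ gives $\P(\sign(\langle g,x\rangle)=\sign(\langle g,y\rangle))=1-\theta/\pi\leq 1/2$. Fix $\delta=1/(C_0\sqrt d)$ for a large absolute constant $C_0$, let $\mathcal{K}$ be a $\delta$-net of $S^{d-1}$ with $|\mathcal{K}|\leq (3C_0\sqrt d)^d$, and condition on the high-probability event that $\|g^{(i)}\|_2\leq 2$ for every $i\in V$ (which holds off an event of probability $\leq e^{-cd}$). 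I would then union-bound two Chernoff-type events over $\mathcal{K}$: \emph{(a)} for every net pair $(x_0,y_0)\in\mathcal{K}^2$ with $\langle x_0,y_0\rangle\leq \eta$ (a small absolute $\eta$), at least $m/3$ rows separate $(x_0,y_0)$ (since the per-row separation probability exceeds $1/2-O(\eta)$); \emph{(b)} for every $x_0\in\mathcal{K}$, at most $m/24$ rows are borderline at $x_0$ in the sense $|\langle g^{(i)},x_0\rangle|\leq 2\delta$ (since the per-row borderline probability is $O(\delta\sqrt d)=O(1/C_0)$, which is arbitrarily small). Each union bound costs a factor $(3C_0\sqrt d)^{2d}$, dominated by the Chernoff tail $e^{-cm}$ once $m=C_1d\log(d)$ with $C_1$ large enough, so the compound failure probability is $\leq e^{-cd\log(d)}$.

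\textbf{Passing to the sphere.} For any $(x,y)\in S^{d-1}\times S^{d-1}$ with $\langle x,y\rangle\leq 0$ take nearest net points $(x_0,y_0)\in\mathcal{K}^2$, which satisfy $\|x-x_0\|_2,\|y-y_0\|_2\leq\delta$ and hence $\langle x_0,y_0\rangle\leq 2\delta+\delta^2\leq\eta$. Event (a) supplies at least $m/3$ rows separating $(x_0,y_0)$. Such a row can fail to separate $(x,y)$ only if its sign flips between $x_0$ and $x$ or between $y_0$ and $y$; since $|\langle g^{(i)},x-x_0\rangle|\leq \|g^{(i)}\|_2\|x-x_0\|_2\leq 2\delta$, a flip forces $|\langle g^{(i)},x_0\rangle|\leq 2\delta$, i.e.\ the row is borderline at $x_0$ (similarly for $y_0$). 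By (b), applied to both $x_0$ and $y_0$, at most $2\cdot m/24=m/12$ rows are unstable, so at least $m/3-m/12>0$ rows separate $(x,y)$ as well, proving the contrapositive. The main obstacle is balancing (a) and (b) simultaneously: sign stability under $\delta$-perturbation requires $\delta\lesssim 1/\sqrt d$ so that borderline rows remain rare, which forces $|\mathcal{K}|\sim d^{d/2}$ and is precisely what drives the $\log(d)$ factor in the sample size $m=C_1 d\log(d)$.
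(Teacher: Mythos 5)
Your argument is correct, but it takes a genuinely different route from the paper's: the paper's entire proof is a citation to Theorem~2 of Jacques, Laska, Boufounos, and Baraniuk, the binary $\epsilon$-stable embedding theorem, which says that with $m\gtrsim\epsilon^{-2}d\log d$ Gaussian rows the normalized Hamming distance $d_H(\sign(Ax),\sign(Ay))$ uniformly approximates the normalized geodesic distance $\tfrac{1}{\pi}\arccos\langle x,y\rangle$ to within $\epsilon$. Taking $\epsilon$ a small absolute constant, $\sign(Ax)=\sign(Ay)$ forces $d_H=0$, hence $\arccos\langle x,y\rangle\le\pi\epsilon<\pi/2$, i.e.\ $\langle x,y\rangle>0$ and $\|x-y\|_2<\sqrt{2}$. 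You reprove from scratch exactly the one-sided piece of that embedding that the lemma actually uses, with the same underlying machinery on which the cited theorem's proof rests: the per-row sign-disagreement probability $\theta/\pi$, a $\delta$-net at scale $\delta\asymp d^{-1/2}$, Chernoff bounds, a union bound over the net, and a sign-stability event (your (b)) to transfer from net points to arbitrary sphere points. Your version buys a self-contained proof whose constants and failure probability $e^{-cd\log d}$ are visibly driven by the net cardinality $\approx e^{d\log d}$; what you give up relative to the citation is the two-sided Hamming/geodesic comparison, which the paper does not need here. One small quibble: you suggest the $\log d$ in $|V|=C_1 d\log(d)$ is forced by the need for a $d^{-1/2}$-net, but chaining or Gaussian-width arguments give the full-sphere embedding with $m\gtrsim d\,\epsilon^{-O(1)}$ and no $\log d$; the logarithm is an artifact of the net-based route (and of the paper's preference for a failure probability matching its other $e^{-cd\log d}$ terms), not an intrinsic obstruction. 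This does not affect the correctness of what you wrote.
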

\begin{proof}
    This is a direct application of Theorem 2 in \cite{jacques2013robust} considering the measurements formed using the matrix \(A\).
\end{proof}
\noindent
Recall that to achieve our goal to transition from \eqref{eq: feasibility} to \eqref{eq: eric dreams of squaring}, it would suffice to have \(y_1 > 0\) and \(\langle g^{(i)}_{[-1]}, y_{[-1]}\rangle > 0\) for all \(i \in S_{\tau, x}\) and all \(y\) in the same cell as \(x\). By nature of the latter condition being a ``for all \(y\)'' statement, we will follow a relatively standard argument relying on a probabilistic union bound coupled with a continuity argument. To make the continuity argument work, we will use  the slightly stronger condition \(\langle g^{(i)}_{[-1]}, y_{[-1]}\rangle > \eta > 0\) for \(\eta\) to be defined below.
\begin{corollary}\label{cor: tail inner products positive, fixed y}
     Let \(x=e_1\), the first standard basis vector,  fix \(y \in S^{d-1}\), and suppose that \(\|y_{[-1]}\|_2^2 \geq \frac{1}{M^2}\). Define \(\W \subset S_{\tau, x}\) as in Theorem \ref{thm: main}, and set \(\eta =  C_5 \frac{\log(M)}{M^2}\). Consider the subset \(\SSS := \{i \in \W : \langle g^{(i)}_{[-1]}, y_{[-1]} \rangle > \eta\} \subset S_{\tau, x}\). Then with probability at least \(1-e^{-c d\log(d)\log(M)}\), we have \(\widetilde{C}_3d\log(d)\log(M) \leq |\SSS| \leq \widetilde{C}_4d\log(d)\log(M)\).
\end{corollary}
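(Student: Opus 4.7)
The plan is to mirror the proof of Lemma \ref{lem: S_tau} but with a refined per-index success probability that accounts for the extra constraint \(\langle g^{(i)}_{[-1]}, y_{[-1]} \rangle > \eta\). The crucial structural observation is that because \(x = e_1\), the event \(\{-\tau < \langle g^{(i)},x\rangle < 0\}\) depends only on \(g^{(i)}_1\), while the event \(\{\langle g^{(i)}_{[-1]}, y_{[-1]}\rangle > \eta\}\) depends only on \(g^{(i)}_{[-1]}\). Since the coordinates of \(g^{(i)} \sim \Nc(0, d^{-1}I_d)\) are independent, these two events are independent for each fixed \(i\), and the indicator random variables \(X_i := \1(i \in \SSS)\) are mutually independent across \(i \in [M]\) as well.

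Next I would estimate \(p := \P(i \in \SSS)\) by writing \(p = \P(-\tau < g^{(i)}_1 < 0)\cdot \P(\langle g^{(i)}_{[-1]}, y_{[-1]}\rangle > \eta)\). The first factor is, as in Lemma \ref{lem: S_tau}, equal to \(\tfrac{1}{2}\erf(\tau\sqrt{d}/\sqrt{2})\), which is comparable to \(\tau\sqrt{d}\) by the same Riemann-sum argument used there. For the second factor, note that \(\langle g^{(i)}_{[-1]}, y_{[-1]}\rangle \sim \Nc(0, \|y_{[-1]}\|_2^2/d)\), so it has the same distribution as \(\sigma Z\) with \(Z \sim \Nc(0,1)\) and \(\sigma = \|y_{[-1]}\|_2/\sqrt{d} \geq 1/(M\sqrt{d})\). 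Consequently \(\P(\langle g^{(i)}_{[-1]}, y_{[-1]}\rangle > \eta) \geq \P(Z > \eta\sqrt{d}/\|y_{[-1]}\|_2) \geq \P(Z > C_5 \sqrt{d}\log(M)/M)\). Under the hypothesis \(M/\log(M) \gtrsim d\log(d)\), the threshold \(\sqrt{d}\log(M)/M\) is bounded by an absolute constant (in fact it tends to \(0\)), so this probability is bounded below by some absolute constant \(c_0 > 0\). Putting the pieces together, \(p \asymp \tau\sqrt{d}\), and hence \(Mp \asymp M\tau\sqrt{d} = C_2\, d\log(d)\log(M)\).

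Finally I would apply the same multiplicative Chernoff inequality used in the proof of Lemma \ref{lem: S_tau} to the sum \(|\SSS| = \sum_{i=1}^M X_i\): for any fixed \(s \in (0,1]\),
\begin{align*}
    \P\bigl(\bigl||\SSS| - Mp\bigr| \geq s Mp\bigr) \leq 2\exp(-c s^2 Mp) \leq 2\exp(-c' d\log(d)\log(M)).
\end{align*}
Choosing \(s\) to be a small enough constant produces both the lower bound \(|\SSS| \geq \widetilde{C}_3 d\log(d)\log(M)\) and the upper bound \(|\SSS| \leq \widetilde{C}_4 d\log(d)\log(M)\) with the claimed probability. I do not foresee a serious obstacle: the only nontrivial point is verifying that \(\P(\langle g^{(i)}_{[-1]}, y_{[-1]}\rangle > \eta)\) is bounded away from zero uniformly over all \(y\) with \(\|y_{[-1]}\|_2 \geq 1/M\), and this follows cleanly from the assumption \(M/\log(M) \gtrsim d\log(d)\) combined with the lower bound \(\|y_{[-1]}\|_2 \geq 1/M\), which together force \(\eta/\sigma\) to be at most a constant.
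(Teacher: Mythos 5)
Your proposal is correct and follows essentially the same route as the paper's proof: both factor the per-index probability using independence of $g^{(i)}_1$ from $g^{(i)}_{[-1]}$, both reduce $\P(\langle g^{(i)}_{[-1]}, y_{[-1]} \rangle > \eta)$ to a standard Gaussian tail at threshold at most $\eta M\sqrt d = C_5 \sqrt d \log(M)/M$ and argue this is bounded away from zero under $M/\log(M) \gtrsim d\log(d)$, and both conclude with the multiplicative Chernoff bound from Lemma~\ref{lem: S_tau}. Your version is a slightly more explicit rewrite of the same argument (you factor $p$ cleanly where the paper implicitly writes $\E|\SSS| \geq \tilde c\, \E|\W|$), but there is no substantive difference.
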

\begin{proof}
    As before, we have
    \begin{align*}
        |\SSS| := \sum_{i\in[M]} \1(-\tau \leq \langle g^{(i)}, x \rangle \leq 0) \1(\langle g^{(i)}_{[-1]}, y_{[-1]} \rangle >\eta).
    \end{align*}
    Notice that \(\P \left(\langle g^{(i)}_{[-1]}, y_{[-1]} \rangle > \eta \right) \geq \P\left( \gamma \geq  \eta M\sqrt{d} \right)\), where \(\gamma \sim \Nc(0,1)\) and that, as in ~\eqref{eq: S_t}, the events defined in the indicator functions are independent.
    Using a Riemann sum to approximate this probability, we have that this aforementioned probability is lower bounded by \(\frac{1}{2} - c' \eta M\sqrt{d} = \frac{1}{2} - C' \frac{\sqrt{d}\log(M)}{M} = \tilde{c} > 0\). So, we find \(\E|\SSS| =  \tilde{c}\E |\W|\), and using a Chernoff bound as in Lemma \ref{lem: S_tau} the claim follows.
\end{proof}

The proof of the Lemma~\ref{lem: smallest singular value Phi_y, uniform} depends on the following intermediate computation, whose statement and proof are provided in Lemma~\ref{lem: minimum singular value},  Section~\ref{sec:app}.

\begin{lemma}\label{lem: smallest singular value Phi_y, uniform}
 Fix \(x = e_1\). Then the following is true for all \(y \in S^{d-1}\) with \(\|y_{[-1]}\|_2^2 \geq \frac{1}{M^2}\): define \(S_{\tau, x}^{y} := \{i \in \W : \langle g^{(i)}_{[-1]}, y_{[-1]}\rangle > 0\}\), and define the matrix \(G_{y} \in \R^{|S_{\tau, x}^{y}| \times (d-1)}\) by populating its rows with \(g^{(i)}_{[-1]}\), where \(i \in S_{\tau, x}^{y}\). Then with probability \(1 - e^{-c'd \log(d) \log(M)} - 2e^{-c''d\log(M)}\), we have
    \begin{align*}
        \min_{z\in S^{d-2}} \|G_{y} z\|_2^2 \gtrsim \log(d)\log(M).
    \end{align*}
\end{lemma}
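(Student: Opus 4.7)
The plan is to combine an $\epsilon$-net argument on $y$ with a continuity step that exploits the $\eta$-margin built into Corollary \ref{cor: tail inner products positive, fixed y}. First, for any \emph{fixed} $y' \in S^{d-1}$ satisfying $\|y'_{[-1]}\|_2 \geq 1/M$, consider the slightly smaller index set
\begin{equation*}
    T_{y'} := \{i \in \W : \langle g^{(i)}_{[-1]}, y'_{[-1]}\rangle > \eta\}, \quad \eta = C_5 \log(M)/M^2,
\end{equation*}
as studied in Corollary \ref{cor: tail inner products positive, fixed y}. That corollary yields $|T_{y'}| \gtrsim d\log(d)\log(M)$ with probability at least $1 - e^{-c d \log(d)\log(M)}$. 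Conditional on this cardinality, the rows of the submatrix $\widetilde{G}_{y'}$ with indices in $T_{y'}$ are i.i.d.\ $\mathcal{N}(0, d^{-1} I_{d-1})$, and a direct application of Lemma \ref{lem: minimum singular value} (the appendix's smallest-singular-value lower bound for a tall Gaussian matrix of the appropriate shape) gives
\begin{equation*}
    \min_{z \in S^{d-2}} \|\widetilde{G}_{y'} z\|_2^2 \gtrsim \log(d)\log(M)
\end{equation*}
with probability at least $1 - 2 e^{-c'' d\log(M)}$.

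To upgrade this to a bound uniform over $y$, let $\mathcal{N}$ be a standard $\epsilon$-net of $\{y \in S^{d-1} : \|y_{[-1]}\|_2 \geq 1/M\}$ of cardinality at most $(3/\epsilon)^d$. A standard Gaussian tail bound provides $R = O\bigl(1 + \sqrt{\log(M)/d}\bigr)$ with $\max_{i \in [M]} \|g^{(i)}_{[-1]}\|_2 \leq R$ on all but an event of probability $e^{-cd\log(M)}$. Choose $\epsilon$ small enough that $R\epsilon < \eta$, for instance $\epsilon \asymp \log(M)/M^2$; then $\log(1/\epsilon) = O(\log M)$ and $|\mathcal{N}| \leq \exp(C d\log(M))$. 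Union-bounding the two failure events above across $\mathcal{N}$ absorbs the net's cardinality into the exponents (after possibly enlarging constants), so the stated quantity $e^{-c'd\log(d)\log(M)} + 2 e^{-c''d\log(M)}$ upper bounds the probability that either conclusion fails at some net point.

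On the complementary event, transfer the bound to an arbitrary admissible $y$ by continuity. Pick $y' \in \mathcal{N}$ with $\|y - y'\|_2 \leq \epsilon$. For every $i \in T_{y'}$, Cauchy--Schwarz combined with $\|g^{(i)}_{[-1]}\|_2 \leq R$ gives
\begin{equation*}
    \langle g^{(i)}_{[-1]}, y_{[-1]}\rangle \geq \langle g^{(i)}_{[-1]}, y'_{[-1]}\rangle - R\epsilon > \eta - \eta = 0,
\end{equation*}
so $T_{y'} \subseteq S_{\tau, x}^{y}$ and $\widetilde{G}_{y'}$ is a row-submatrix of $G_y$. Therefore
\begin{equation*}
    \min_{z \in S^{d-2}} \|G_y z\|_2^2 \geq \min_{z \in S^{d-2}} \|\widetilde{G}_{y'} z\|_2^2 \gtrsim \log(d)\log(M),
\end{equation*}
as required.

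The principal difficulty is coordinating three parameters: the margin $\eta$ must be large enough to absorb the discretization error $R\epsilon$, yet small enough that Corollary \ref{cor: tail inner products positive, fixed y} still supplies $|T_{y'}| \gtrsim d\log(d)\log(M)$; and the resulting net cardinality $(3/\epsilon)^d \leq \exp(Cd\log M)$ must be dominated by the allotted failure-probability budgets $e^{-c'd\log(d)\log(M)}$ and $e^{-c''d\log(M)}$ coming, respectively, from Corollary \ref{cor: tail inner products positive, fixed y} and from the pointwise singular value bound. The choice $\eta = C_5 \log(M)/M^2$ fixed in Corollary \ref{cor: tail inner products positive, fixed y} is calibrated precisely so that these constraints are simultaneously satisfiable.
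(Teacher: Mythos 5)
The overall structure of your argument — pointwise bound via Corollary \ref{cor: tail inner products positive, fixed y} and Lemma \ref{lem: minimum singular value}, then an $\epsilon$-net on $y$ sized so that the margin $\eta$ absorbs the discretization error $R\epsilon$ — matches the paper's proof. But there is a genuine error in the pointwise step.

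You write that, conditional on the cardinality of $T_{y'}$, the rows of $\widetilde{G}_{y'}$ indexed by $T_{y'}$ are ``i.i.d.\ $\mathcal{N}(0, d^{-1}I_{d-1})$.'' This is false. Membership in $T_{y'}$ requires $\langle g^{(i)}_{[-1]}, y'_{[-1]}\rangle > \eta$, and conditioning on that event changes the law of $g^{(i)}_{[-1]}$: after rotating so that $y'_{[-1]}$ is parallel to $e_1\in\mathbb{R}^{d-1}$, the first coordinate of each selected row follows a one-sided truncated Gaussian (the law of $g$ given $g > \eta/\|y'_{[-1]}\|_2$), while the remaining coordinates stay Gaussian. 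The rows are indeed i.i.d.\ across $i\in T_{y'}$, but the common law is not $\mathcal{N}(0,d^{-1}I_{d-1})$, so you cannot invoke a Gaussian singular-value bound directly. Lemma \ref{lem: minimum singular value} is in fact a sample-covariance concentration estimate stated for sub-gaussian rows with second moment matrix $\Sigma = \E g^{(i)}_{[-1]}g^{(i)T}_{[-1]}$ and sub-gaussian norm $K$; to extract a lower bound on $\min_z\|\widetilde{G}_{y'}z\|_2^2$ one still needs to show $\lambda_{\min}(\Sigma)\gtrsim d^{-1}$ and $K\lesssim \sqrt{d^{-1}}$ for the \emph{conditional} distribution. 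The paper does precisely this: an integration-by-parts computation shows $\Sigma$ is diagonal with first entry $\alpha > d^{-1}$ and remaining entries $d^{-1}$ (the second moment of the truncated coordinate \emph{increases}, so $\lambda_{\min}(\Sigma)=d^{-1}$ is preserved), and a separate calculation verifies that the truncated coordinate has Orlicz-2 norm $\lesssim\sqrt{d^{-1}}$ provided $\eta\sqrt{d}/\|y_{[-1]}\|_2$ is bounded by a small constant. These are exactly the two quantities your proposal elides by asserting the rows are Gaussian, and without them the application of Lemma \ref{lem: minimum singular value} does not go through.

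The remainder of your argument — the net of cardinality $\exp(O(d\log M))$, the bound on $\max_i\|g^{(i)}_{[-1]}\|_2$, the choice of $\epsilon$ so that $R\epsilon<\eta$, and the inclusion $T_{y'}\subseteq S_{\tau,x}^{y}$ giving $\widetilde{G}_{y'}$ as a row-submatrix of $G_y$ — is correct and mirrors the paper.
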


\begin{proof}
    To start, fix \(y\)  and set \(\eta =  C_5 \frac{\log(M)}{M^2}\). By rotational invariance, we may assume \(y_{[-1]}\) is parallel to \(e_1 \in \R^{d-1}\). Define the matrix \(\tilde{G}_{y} \in \R^{|\SSS| \times (d-1)}\) by populating its rows with \(g^{(i)}_{[-1]}\), where \(i \in \SSS\) and \(\SSS\) is defined in Corollary \ref{cor: tail inner products positive, fixed y}. By that very same corollary, \(\widetilde{C}_3d\log(d)\log(M) \leq |\SSS| \leq \widetilde{C}_4d\log(d)\log(M)\) with probability at least \(1 - e^{-c_1 d\log(d)\log(M)}\). Since any orthogonal projection of a standard Gaussian random vector is itself a Gaussian random vector, the rows of \(\tilde{G}_{y}\) are independent and identically distributed. Within each row, the first entry is distributed according to the conditional law \(g | g > \frac{\eta}{\|y_{[-1]}\|_2}\) and the remaining entries are i.i.d. Using Lemma \ref{lem: minimum singular value} with \(t = \sqrt{c_2 d\log(M)}\), we have with probability at least \(1 - e^{-c_2 d\log(M)}\),
    \begin{align*}
        \Big\| \frac{1}{|\SSS|} \tilde{G}_{y}^T\tilde{G}_{y} - \Sigma \Big\|_{op} \leq C K^2 \left(\sqrt{\frac{d}{|\SSS|}} + \sqrt{\frac{c_2d\log(M)}{|\SSS|}}\right),
    \end{align*}
    where \(K = \max \{ \|g^{(i)}_{[-1]}\|_{\psi_2} : i \in \SSS\}\) is the maximal Orlicz \(2\)-norm of the rows and \(\Sigma = \E g_{[-1]}g^{T}_{[-1]} \in \mathbb{R}^{(d-1) \times (d-1)}\). We will control both of these quantities shortly. For now though, notice that this immediately gives us the following lower bound on the minimal singular value of \(\tilde{G}_{y}\), which holds with high probability, by the following algebra:
    \begin{align*}
        &\frac{1}{|\SSS|} \inf_{\|z\|_2 = 1}\| \tilde{G}_{y} z \|_2^2
        = \frac{1}{|\SSS|} \inf_{\|z\|_2 = 1}\| \tilde{G}_{y}^T \tilde{G}_{y} z \|_2\\
        &=  \inf_{\|z\|_2 = 1}\left\|\left(\frac{1}{|\SSS|}\tilde{G}_{y}^T \tilde{G}_{y} - \Sigma + \Sigma \right) z \right\|_2\\
        &\geq \inf_{\|z\|_2 = 1}\|\Sigma z \|_2 - \sup_{\|z\|_2 = 1}\left\|\left(\frac{1}{|\SSS|}\tilde{G}_{y}^T \tilde{G}_{y} - \Sigma\right) z \right\|_2\\
        &\geq \inf_{\|z\|_2 = 1}\|\Sigma z \|_2 - C K^2 \left(\sqrt{\frac{d}{|\SSS|}} + \sqrt{\frac{c_2d\log(M)}{|\SSS|}}\right).
    \end{align*}
    Now, we quickly calculate what the minimal singular value of \(\Sigma \) is. In the following calculations, let \(\gamma~\sim~\Nc(0, 1)\):
    \begin{align*}
        \Sigma &= \E g_{[-1]}g^T_{[-1]} = 
        \begin{bmatrix}
        \alpha & & &\\
        & d^{-1} & &\\
        & & \ddots &\\
        & & & d^{-1}
        \end{bmatrix},\\
        \alpha &= \frac{1}{\P\left( \sqrt{d^{-1}} \gamma \geq \frac{\eta}{\|y_{[-1]}\|_2} \right)} \frac{1}{\sqrt{2 \pi d^{-1}}} \int_{\frac{\eta}{\|y_{[-1]}\|_2}}^{\infty} x^2 e^{-\frac{x^2}{2d^{-1}}}\,\, dx.
    \end{align*}
    We can use integration by parts to expand this integral as
    \begin{align}\label{eq: integration by parts}
        \int_{\frac{\eta}{\|y_{[-1]}\|_2}}^{\infty} x^2 e^{-\frac{x^2}{2d^{-1}}}\,\, dx = \frac{\eta e^{\frac{-\eta}{2\|y_{-1}\|_2 d^{-1}}}}{\|y_{-1}\|_2 d^{-1}} + d^{-1} \int_{\frac{\eta}{\|y_{-1}\|_2}}^{\infty} e^{- \frac{x^2}{2d^{-1}}} \,\, dx.
    \end{align}
    Notice that when we multiply the second term in \eqref{eq: integration by parts} by \(\frac{1}{\P\left( \sqrt{d^{-1}} \gamma \geq \frac{\eta}{\|y_{[-1]}\|_2} \right)} \frac{1}{\sqrt{2 \pi d^{-1}}}\) it simplifies to
    \begin{align*}
        \frac{1}{\P\left( \sqrt{d^{-1}} \gamma \geq \frac{\eta}{\|y_{[-1]}\|_2} \right)} \frac{1}{\sqrt{2 \pi d^{-1}}}  d^{-1} \int_{\frac{\eta}{\|y_{-1}\|_2}}^{\infty} e^{- \frac{x^2}{2d^{-1}}} \,\, dx\\
        = d^{-1} \frac{1}{\P\left( \sqrt{d^{-1}} \gamma \geq \frac{\eta}{\|y_{[-1]}\|_2} \right)} \P\left( \sqrt{d^{-1}} \gamma \geq \frac{\eta}{\|y_{[-1]}\|_2} \right) = d^{-1}.
    \end{align*}
    Therefore, we have
    \begin{align*}
        \alpha = d^{-1} + \frac{\sqrt{d^{-1}}\frac{\eta}{\|y_{[-1]}\|_2} e^{-\frac{1}{2d^{-1}}\left(\frac{\eta}{\|y_{[-1]}\|_2}\right)^2}}{\sqrt{2\pi}\P\left( \sqrt{d^{-1}} \gamma \geq \frac{\eta}{\|y_{[-1]}\|_2} \right)} > d^{-1}.
    \end{align*}
    As for \(K\), by independence of the entries of \(g^{(i)}_{[-1]}\) we have using Lemma 2.6.8 and Lemma 3.4.2 from \cite{vershynin2018high}
    \begin{align*}
        \|g^{(i)}_{[-1]}\|_{\psi_2} \leq C \max_{j \in \{2, 3, \hdots, d\}} \|g^{(i)}_{j}\|_{\psi_2},
    \end{align*}
    so it suffices to upper-bound the Orlicz \(2\)-norm of the first entry \(g^{(i)}_{[-1],1}\). By definition, we have
    \begin{align*}
         \|g^{(i)}_{[-1],1}\|_{\psi_2} = \inf \left\{ \omega > 0 : \E e^{g^{(i)2}_{[-1],1}/\omega^2} \leq 2 \right\}.
    \end{align*}
    Note that when \(\omega = 2\sqrt{d^{-1}}\), we have
    \begin{align*}
        &\E e^{g^{(i)2}_{[-1],1}/\omega^2} =  \frac{1}{\P\left( \sqrt{d^{-1}} \gamma \geq \frac{\eta}{\|y_{[-1]}\|_2} \right)} \frac{1}{\sqrt{2 \pi d^{-1}}} \int_{\frac{\eta}{\|y_{[-1]}\|_2}}^{\infty} e^{\frac{x^2}{4d^{-1}}}e^{\frac{-x^2}{2d^{-1}}}\,\, dx \\
        &= \frac{\sqrt{2}}{\P\left( \sqrt{d^{-1}} \gamma \geq \frac{\eta}{\|y_{[-1]}\|_2} \right)} \frac{1}{\sqrt{2 \pi (\sqrt{2}d^{-1/2})^2}} \int_{\frac{\eta}{\|y_{[-1]}\|_2}}^{\infty} e^{\frac{-x^2}{2 (\sqrt{2}d^{-1/2})^2}}\,\, dx\\
        &= \frac{\sqrt{2}\P\left( \sqrt{2 d^{-1}} \gamma \geq \frac{\eta}{\|y_{[-1]}\|_2} \right)}{\P\left( \sqrt{d^{-1}} \gamma \geq \frac{\eta}{\|y_{[-1]}\|_2} \right)} = \frac{\sqrt{2}\P\left( \gamma \geq \frac{\eta\sqrt{d}}{\|y_{[-1]}\|_2\sqrt{2}} \right)}{\P\left(\gamma \geq \frac{\eta\sqrt{d}}{\|y_{[-1]}\|_2} \right)}.
    \end{align*}
    One may numerically check that the above quantity is less than \(2\) provided \(\frac{\eta\sqrt{d}}{\|y_{[-1]}\|_2} \leq \frac{C \sqrt{d}\log(M)}{M} < \frac{1}{2}\). Since \(M \gtrsim d\log(d)\log(M)\), the latter inequality will hold provided the constant in the lower bound of \(M\) is large enough. Hence, \(\|g^{(i)}_{[-1],1}\|_{\psi_2} \lesssim \sqrt{d^{-1}}\). Consequentially, we have shown with probability at least \(1 - e^{-c_2 d\log(M)} - e^{-c_1d\log(d)\log(M)}\) that
    \begin{align}\label{eq: lower bound sing val fixed y}
    \frac{1}{|\SSS|} \inf_{\|z\|_2 = 1}\| \tilde{G}_{y} z \|_2^2 &\geq d^{-1} - C d^{-1} \left(\sqrt{\frac{d}{|\SSS|}} + \sqrt{\frac{c_2d\log(M)}{|\SSS|}}\right)\nonumber\\
    &\geq d^{-1}\left( 1 - C' \left(\sqrt{\frac{1}{\log(d)\log(M)}} + \sqrt{\frac{1}{\log(d)}}\right)\right)\nonumber\\
    \implies \inf_{\|z\|_2 = 1}\| \tilde{G}_{y} z \|_2^2 &\gtrsim d^{-1}|\SSS| \gtrsim \log(d)\log(M).
    \end{align}
    To get the uniform result over all \(y\), condition on the event \(\Ec\) where \(\max_{i \in [M]} \|g^{(i)}\|_2 \leq C' \sqrt{\log(M)}\), which occurs with probability at least \(1 - \exp(-c_3 d\log(M))\) \cite{vershynin2018high}. Let \(\eps = \frac{1}{M^2}\) and \(\Nc\) be an \(\eps\)-net of \(S^{d-1}\). We know that the cardinality of this net is bounded by \(|\Nc| \leq \left(3M^2\right)^d \leq e^{c_4 d\log(M)}\) \cite{vershynin2018high}. Union bounding over this net, we have that the event \(\Ec\), the result of Corollary \ref{cor: tail inner products positive, fixed y} and \eqref{eq: lower bound sing val fixed y} hold for \(\tilde{G}_{\xi}\) for all \(\xi \in \Nc\) simultaneously with probability at least \[1 - e^{-c_1d \log(d) \log(M)} - e^{-c_5d\log(M)} - e^{-c_3 d\log(M)}.\] Here, \(c_5 = c_2 - c_4 > 0\) provided we choose \(c_2 > c_4\) in our choice of parameter \(t\) in Lemma \ref{lem: minimum singular value}. On this event, for any \(y \in S^{d-1}\setminus \Nc\) we have some \(\xi \in \Nc\) with \(\|y - \xi\|_2 \leq \eps\) and therefore for any \(i \in \tilde{S}^{\xi}_{\tau, x}\), we have
    \begin{align*}
        \langle g^{(i)}_{[-1]}, y_{[-1]} &\rangle \geq \langle g^{(i)}_{[-1]}, \xi_{[-1]} \rangle - \|g^{(i)}_{[-1]}\|_2 \eps \\ 
        &\geq \eta - C' \frac{\sqrt{\log(M)}}{M^2} > 0,
    \end{align*}
    where the above inequality holds, for example, provided \(C_5\) in \(\eta = C_5 \log(M)/M^2\) satisfies \(C_5 > C'\). So, in other words, we have \(\tilde{S}^{\xi}_{\tau, x} \subset S^{y}_{\tau, x}\). This immediately implies that \(\inf_{\|z\|_2 = 1} \| G_{y} z\|_2^2 \geq \inf_{\|z\|_2 = 1} \|\tilde{G}_{\xi} z\|_2^2 \gtrsim  d^{-1}|\tilde{S}^{\xi}_{\tau, x}| \gtrsim  \log(d)\log(M)\).
\end{proof}

\proof[Proof of Theorem \ref{thm: main}]
\noindent
    We are now ready to make our heuristic proof sketch rigorous. We focus on bounding \(\max \|x-y\|_2\) where the maximum is taken over all \(y \in S^{d-1}\) which satisfy \(\sign(\langle g^{(i)}, x\rangle) = \sign(\langle g^{(i)}, y\rangle)\) for all \(i \in S_{\tau, x}\). By rotational invariance, we may assume without loss of generality that \(x=e_1\). Note that if \(\|y_{[-1]}\|_2^2 \leq M^{-2}\) then our result automatically holds, so we focus only on those \(y\) for which said inequality is violated. Since \(\max \|x-y\|_2^2 = 2 - 2\min{y_1}\), we turn to finding a lower bound on \(\min{y_1}\). By feasibility, we know for any \(i \in S_{\tau, x}^y\)
\begin{align*}
    \langle g^{(i)}, x\rangle \langle g^{(i)}, y\rangle = g^{(i)}_1  \langle g^{(i)}, y\rangle > 0 \\
    \iff(g^{(i)}_1)^2 y_1 > - g^{(i)}_1 \langle g^{(i)}_{[-1]}, y_{[-1]}\rangle,
\end{align*}
where $S_{\tau ,x}^y$ is as defined in Lemma~\ref{lem: smallest singular value Phi_y, uniform}. By construction, both sides of this inequality are positive, as $g_1^{(i)}$ is negative and \(y_1 > 0\) with probability at least \(1-\exp(-c''''d\log(d))\) for all \(y\) in the same cell as \(x\) by Lemma \ref{lem: feasible y, in the same half space UNIFORM}. Squaring both sides of the inequality, using \(|g^{(i)}_1| < \tau\) for \(i \in S^y_{\tau, x} \subset \W\), and \(\|y_{[-1]}\|_2^2 = (1-y_1^2)\) means the set of feasible \(y's\) necessarily satisfy
\begin{align*}
    y_1^2 &\geq  1 - \frac{\tau^2}{\left\langle g^{(i)}_{[-1]}, \frac{y_{[-1]}}{\|y_{[-1]}\|_2} \right\rangle^2 + \tau^2}, \,\, \text{ for all } i \in S_{\tau, x}^y.
\end{align*}
In other words, to control \(\max \|x-y\|_2\) for \(y\) in the same cell as \(x\) induced by \(S_{\tau, x}\), it is sufficient to find a lower bound on the quantity
\begin{align*}
    \min_{z \in S^{d-2}} \max_{i\in S_{\tau, x}^y } \left\langle g^{(i)}_{[-1]}, z \right\rangle^2.
\end{align*}
Letting $G_y$ be as defined in Lemma~\ref{lem: smallest singular value Phi_y, uniform}, notice that we may lower bound this quantity by
\begin{align*}
    \min_{z \in S^{d-2}} \max_{i\in S_{\tau, x}^y} \left\langle g^{(i)}_{[-1]}, z \right\rangle^2 &\geq \min_{z \in S^{d-2}} |S_{\tau,x}^y|^{-1} \sum_{i \in S_{\tau, x}^y}  \left\langle g^{(i)}_{[-1]}, z \right\rangle ^2 \\
    &= |S_{\tau, x}^y|^{-1} \min_{z \in S^{d-2}} \|G_y z\|_2^2.
\end{align*}
By Lemma \ref{lem: smallest singular value Phi_y, uniform}, and Corollary \ref{cor: tail inner products positive, fixed y} we know with probability at least \(1 - e^{-c'd \log(d) \log(M)} - 2e^{-c''d\log(M)} \) \(\min_{z\in S^{d-2}} \|G_{y}z\|_2^2 \gtrsim \log(d)\log(M)\) and $|S_{\tau, x}^y|\lesssim d\log(d)\log(M)$ for all \(y\). Therefore, 
\begin{align*}
    \min_{z \in S^{d-2}} \max_{i\in S_{\tau, x}^y} \left\langle g^{(i)}_{[-1]}, z \right\rangle^2 \gtrsim d^{-1}.
\end{align*}
Combining our results from this section, we have with high probability
\begin{align*}
    y_1^2 &\geq 1 - \frac{\tau^2}{Cd^{-1} + \tau^2}.
\end{align*}
\noindent
With our choice of \(\tau = C_2 \frac{\sqrt{d}\log(d)\log(M)}{M}\), this means
\begin{align*}
    y_1^2 \geq 1 - C \frac{d\log^2(d)\log^2(M)}{\frac{M^2}{d} + d\log^2(d)\log^2(M)}.
\end{align*}
This concludes the proof of Theorem \ref{thm: main}.

\section{Uniform Results}
\noindent
As a result of working so hard to get uniform control on \(\langle g^{(i)}_{[-1]}, y_{[-1]} \rangle\) for all \(y\) in the same cell as \(x\) using a union bound argument, we can get a uniform result for all \(x \in S^{d-1}\) nearly for free. The consequence of uniformly bounding the radius of the cell around \(x\) induced by the vectors selected according to \(S_{\tau, x}\) gives an upper bound on the radii for all cells in the tessellation induced by the entire collection of vectors \(g^{(1)}, \hdots, g^{(M)}\). This follows from the inclusion \(\{y \in S^{d-1} : \sign(\langle g^{(i)}, y \rangle) = \sign(\langle g^{(i)}, x \rangle), \,\, \text{ for all } i \in [M]\} \subset \{y \in S^{d-1} : \sign(\langle g^{(i)}, y \rangle) = \sign(\langle g^{(i)}, x \rangle), \,\, \text{ for all } i \in S_{\tau, x}\}\).
\begin{corollary}\label{cor: uniform result}
Let $g^{(i)} \in \mathbb{R}^{d}$ be drawn from \(\Nc(0, d^{-1}I)\), and set $\tau = C_2 \frac{\sqrt{d}\log(d)\log(M)}{M}$. Then with probability at least $1 - 3\exp (-c d \log (d)) - 2\exp(-c'd\log(M))$ the
result of Theorem \ref{thm: main} holds uniformly for all \(x \in S^{d-1}\). 
\end{corollary}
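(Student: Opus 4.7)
The plan is to promote the conclusion of Theorem~\ref{thm: main} to all $x \in S^{d-1}$ simultaneously, using the cell-inclusion observation stated just before the corollary together with an $\eps$-net argument in $x$. By that inclusion, the diameter of the full cell around any $x$ is bounded by the diameter of the reduced cell around $x$ defined by $S_{\tau, x}$, so a uniform-in-$x$ version of Theorem~\ref{thm: main} immediately bounds the diameter of every cell in the full tessellation.

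The key auditing step is to separate the probabilistic events in the proof of Theorem~\ref{thm: main} into those that are already uniform in $x$ and those that are not. The event of Lemma~\ref{lem: feasible y, in the same half space UNIFORM}, which supplies the $3e^{-cd\log d}$ term, is uniform in both $x$ and $y$, since it is a direct application of the binary $\eps$-stable embedding result of Jacques et al.; it therefore needs no union bound. The events of Lemma~\ref{lem: S_tau}, Corollary~\ref{cor: tail inner products positive, fixed y}, and Lemma~\ref{lem: smallest singular value Phi_y, uniform} are per-$x$, each with failure probability at most $e^{-c'd\log M}$ (the dominant rate among the various $e^{-c_k d\log d \log M}$ and $e^{-c'' d\log M}$ contributions once the constants $C_2$ in $\tau$ and $C_5$ in $\eta$ are chosen generously). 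Union bounding these per-$x$ events over a deterministic $\eps$-net $\mathcal{N}_x \subset S^{d-1}$ with $\eps = 1/M^2$, for which $|\mathcal{N}_x| \leq (3M^2)^d \leq e^{Cd\log M}$, replaces $e^{-c'd\log M}$ by $e^{(C-c')d\log M}$, which is dominated by $2e^{-c''d\log M}$ once $c'$ is taken large enough. This matches the probability bound stated in the corollary.

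It remains to extend from $\mathcal{N}_x$ to all $x \in S^{d-1}$ by a continuity argument in $x$ parallel to the one performed for $y$ at the end of the proof of Lemma~\ref{lem: smallest singular value Phi_y, uniform}. On the high-probability event $\max_{i \in [M]} \|g^{(i)}\|_2 \lesssim \sqrt{\log M}$, any $x$ is within $\eps = 1/M^2$ of some $\xi \in \mathcal{N}_x$, so $|\langle g^{(i)}, x\rangle - \langle g^{(i)}, \xi\rangle| \lesssim \sqrt{\log M}/M^2$, which is much smaller than both $\tau = \Theta(\sqrt{d}\log(d)\log(M)/M)$ and $\eta = \Theta(\log(M)/M^2)$. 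Inflating $\tau$ and $\eta$ by this perturbation margin (absorbed in the constants) preserves the cardinality control on $S_{\tau, x}$ from Lemma~\ref{lem: S_tau}, the tail inner-product bound from Corollary~\ref{cor: tail inner products positive, fixed y}, and the singular-value lower bound from Lemma~\ref{lem: smallest singular value Phi_y, uniform}, so the derivation from \eqref{eq: feasibility} to \eqref{eq: cell radius bound, fixed x} applies to every $x \in S^{d-1}$ on the good event. The main obstacle is precisely this continuity step, since $S_{\tau, x}$ is a discontinuous, set-valued function of $x$; the remedy is to choose the tolerances $\tau$ and $\eta$ with enough slack that a perturbation of size $\sqrt{\log M}/M^2$ in each $\langle g^{(i)}, x\rangle$ cannot change the relevant set memberships, mirroring the margin used in the net argument over $y$ inside Lemma~\ref{lem: smallest singular value Phi_y, uniform}.
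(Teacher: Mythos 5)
Your proposal follows the same broad strategy as the paper --- condition on $\max_i \|g^{(i)}\|_2 \lesssim \sqrt{\log M}$, union-bound the per-$x$ events over an $\eps$-net on $S^{d-1}$, then extend to all $x$ by a perturbation argument. However, your handling of the central discontinuity is too vague to be correct as stated. The set $\W = \{i : -\tau < \langle g^{(i)}, x\rangle < 0\}$ has a boundary at $0$, which is precisely the sign-flip threshold. No matter how generously you ``inflate $\tau$,'' the interval $(-\tau, 0)$ still abuts $0$, and for a net point $\xi$ some index $i$ may have $\langle g^{(i)}, \xi\rangle$ arbitrarily close to $0$; a perturbation to a nearby $x$ can then flip the sign of $\langle g^{(i)}, x\rangle$, destroying both the membership of $i$ in $\W$ and --- more fatally --- the sign consistency $\sign(\langle g^{(i)}, x\rangle)=\sign(\langle g^{(i)}, \xi\rangle)$ that the whole argument rests on. Choosing a very fine net ($\eps = 1/M^2$) shrinks the perturbation but does not eliminate the problem.

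The paper's proof fixes this with an explicit device you did not name: it replaces $\W$ by the \emph{shifted} set $\hat{S}_{\tau, x} := \{i : -\tau < \langle g^{(i)}, x\rangle < -\tau/2\}$, which is bounded away from $0$ by a margin of $\tau/2$, and calibrates the net spacing to $\tau' = \tau/(2C'\sqrt{\log M})$ so that $|\langle g^{(i)}, x\rangle - \langle g^{(i)}, \xi\rangle| < \tau/2$ on the conditioning event. This guarantees that for every $i \in \hat{S}_{\tau,\xi}$ the perturbed inner product $\langle g^{(i)}, x\rangle$ stays in $(-3\tau/2, 0)$, so sign consistency holds and the inclusion $\hat{S}_{\tau,\xi} \subset S_{3\tau/2, x}$ follows, after which the triangle inequality closes the argument. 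One also needs to re-verify (via the same Chernoff argument as in Lemma~\ref{lem: S_tau}) that $|\hat{S}_{\tau,\xi}| \asymp d\log(d)\log(M)$, which the paper does. Your sketch correctly identifies where the difficulty lies but stops short of the actual remedy; without introducing a set bounded away from the sign boundary, the continuity step does not go through.

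A minor secondary point: you describe Lemma~\ref{lem: S_tau}, Corollary~\ref{cor: tail inner products positive, fixed y}, and Lemma~\ref{lem: smallest singular value Phi_y, uniform} as all being per-$x$ events requiring a union bound. In fact the paper observes that Corollary~\ref{cor: tail inner products positive, fixed y} and Lemma~\ref{lem: smallest singular value Phi_y, uniform} concern the projections $P_{x^\perp}g^{(i)}$, which are independent of the selection event $\hat{S}_{\tau,x}$, so the only quantity that genuinely needs a fresh union bound over the $x$-net is the cardinality control of $\hat{S}_{\tau,x}$. This is not a fatal issue in your accounting since the stated probability bound would still come out correct, but it clarifies why the probability does not degrade further than the Corollary claims.
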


\begin{proof}
As was the case in our previous lemmata that used a union bound argument and continuity, we will need to initially introduce some small modifications to our definitions to allow ourselves some wiggle room. As before, this will help us move from an argument that works for points on an $\epsilon$-net to arbitrary points. Consider for fixed \(x \in S^{d-1}\) the subset
\begin{align*}
    \hat{S}_{\tau, x} := \{i \in [M] : -\tau < \langle g^{(i)}, x \rangle < -\tau/2\}.
\end{align*}
Using a similar argument to that used in the proof of Lemma \ref{lem: S_tau}, we can show with high probability that \(C_6 d\log(d)\log(M) \leq |\hat{S}_{\tau, x}| \leq C_7 d\log(d)\log(M)\) for any fixed \(x\). Indeed, the expected value is \(\E|\hat{S}_{\tau, x}| = M \P\left(\sqrt{d^{-1}}g \in (-\tau, -\tau/2) \right)\) which means we can use upper and lower Riemann sum estimates to get analogous bounds as in Lemma \ref{lem: S_tau}. Lemma \ref{lem: feasible y, in the same half space UNIFORM} already holds uniformly so there's no need to use a union bound for that event. The arguments in Corollary \ref{cor: tail inner products positive, fixed y} and Lemma \ref{lem: smallest singular value Phi_y, uniform}  hold without any modifications because they concern the projected random variables \(P_{x^{\perp}}g^{(i)}\), the projections of  \(g^{(i)}\) on the orthogonal complement of the span of $x$. These random variables are independent of the event associated with \(\hat{S}_{\tau, x}\). Therefore, the statement of Theorem \ref{thm: main} also holds if \(S_{\tau, x}\) is replaced with \(\hat{S}_{\tau, x}\).

With that matter settled, condition on the event that \(\max_{i \in [M]} \|g^{(i)}\|_2 \leq C' \sqrt{\log(M)}\) which occurs with probability at least \(1 - \exp(-c' d \log(M))\) \cite{vershynin2018high}. Now, let \( \tau = \frac{C_2 \sqrt{d}\log(d)\log(M)}{M}\), \(\tau' = \frac{\tau}{2 C' \sqrt{\log(M)}},\) and let \(\Nc\) be a \(\tau'\) covering of \(S^{d-1}\). Note that \(|\Nc| \leq \left(\frac{3}{\tau'}\right)^d \leq M^{c''d}\). By a union bound argument, we have the result of Theorem \ref{thm: main} using \(\hat{S}_{\tau, y}\) in place of \(S_{\tau, y}\) holding for all \(y \in \Nc\) with probability at least \(1 - 3\exp\left(-cd\log(d)\right) - \exp(-c'd\log(M))\).

For an arbitrary \(x \in S^{d-1}\), let \(y\in \Nc\) satisfy \(\|x-y\|_2 \leq \tau'\). 
Then for any \(i \in \hat{S}_{\tau, y}\) we have
\begin{align*}
    \langle g^{(i)}, x \rangle &= \langle g^{(i)}, y \rangle + \langle g^{(i)}, x - y \rangle < -\tau/2 + C'\sqrt{\log(M)}\|x-y\|_2\\
    &< -\tau/2 + \tau/2 = 0,
\end{align*}
and, additionally, using a similar argument
\begin{align*}
    \langle g^{(i)}, x \rangle > -3\tau/2.
\end{align*}
In other words, we've just shown that \(\hat{S}_{\tau, y} \subset S_{3\tau/2, x}\), and furthermore that \(\sign( \langle g^{(i)}, x \rangle) = \sign( \langle g^{(i)}, y \rangle)\) for all \(i \in \hat{S}_{\tau, y}\). The inclusion \(\hat{S}_{\tau, y} \subset S_{3\tau/2, x}\) tells us that the cell containing \(x\) induced by the hyperplanes in \(S_{3\tau/2, x}\) has a radius no larger than the radius of the cell containing \(x\) induced by the hyperplanes from \(\hat{S}_{\tau, y}\). Denote the former cell by \(\Cc\) and the latter by \(\widehat{\Cc}\). By triangle inequality and Theorem \ref{thm: main}
\begin{align*}
    \max_{z \in \Cc} \|x-z\|_2 &\leq \max_{z \in \widehat{\Cc}} \|x-z\|_2 \leq \|x - y\|_2 + \max_{z \in\widehat{\Cc}} \|y - z\|\\ &\lesssim \frac{d\log(d)\log(M)}{\sqrt{M^2 + d^2 \log^2(d) \log^2(M)}}.
\end{align*}
\end{proof}
\begin{remark}
One potential practical application of this result is an encoding algorithm for efficiently representing a vector $x\in\R^d$ with $M$ one-bit measurements of the form $\sign(\langle g^{(i)},x \rangle)$. Since the cardinality of $S_{\tau,x}$ is $k \approx d~\mathrm{polylog}(M)$, one could store or transmit the indices associated with \(S_{\tau, x}\) using $\log_2{ {M} \choose {k}} \lesssim d~\mathrm{polylog}{(M)}$ bits in addition to the $k$ bits needed to encode the one bit measurements associated with \(S_{\tau, x}\). Using standard reconstruction techniques (see, e.g., \cite{powell2013quantization, goyal1995quantization}), one would then recover $x$ from $\sign(\langle g^{(i)},x\rangle),  ~i\in S_{\tau,x}$, with the error given in Corollary \ref{cor: uniform result}. This encoding scheme results in a root-exponential decay of the error in the number of bits used.
\end{remark}

\section{Acknowledgements}
The authors would like to thank the reviewers for their thoughtful and useful suggestions which have significantly improved the manuscript.

\section{Appendix}
\label{sec:app}
\begin{lemma}\label{lem: minimum singular value}
    Fix \(x=e_1\) and \(y\in S^{d-1}\). Let \(\tilde{G} \in \R^{|\SSS|\times (d-1)}\) be as in Lemma \ref{lem: smallest singular value Phi_y, uniform} and \(\Sigma := \E g^{(i)}_{[-1]} g^{(i)T}_{[-1]}\). Then for any \(t > 0\) we have
    \begin{align*}
        \left\| |\SSS|^{-1} \tilde{G}_y^T \tilde{G}_y - \Sigma\right\|_{op} &\leq K^2 \max\{\delta, \delta^2\}\\
        \delta :&= C\left( \sqrt{\frac{d-1}{|\SSS|}} + \frac{t}{\sqrt{|\SSS|}} \right),
    \end{align*}
    with probability at least \(1-2\exp(-t^2)\). Here, \(K := \max_{j \in \{2, \hdots, d}\} \|g^{(i)}_{j}\|_{\psi_2}\) where \(i \in \SSS\).
\end{lemma}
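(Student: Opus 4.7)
The plan is to recognize this statement as a direct instance of the standard sample-covariance concentration inequality for sub-Gaussian random vectors, namely Theorem 4.7.1 of \cite{vershynin2018high}. That theorem asserts that if $X_1,\dots,X_N$ are i.i.d.\ sub-Gaussian vectors in $\R^n$ with covariance $\Sigma$ and Orlicz $2$-norm at most $K$, then $\| N^{-1}\sum_{i} X_i X_i^T - \Sigma \|_{op} \leq K^2 \max\{\delta,\delta^2\}$ with $\delta = C(\sqrt{n/N} + u/\sqrt{N})$ holds with probability at least $1 - 2\exp(-u^2)$ --- precisely the claimed form, with $N = |\SSS|$, $n = d-1$, and $u = t$.

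The first step I would carry out is to verify that, conditional on $\SSS$ being a fixed subset of $[M]$, the rows of $\tilde{G}_y$ are i.i.d.\ sub-Gaussian vectors in $\R^{d-1}$. The event $\{i \in \SSS\}$ is measurable with respect to $g^{(i)}$ alone and the vectors $g^{(i)}$ are independent across $i$, so the conditioning preserves independence across rows as well as the identical conditional distribution of each row. After the rotation placing $y_{[-1]}$ parallel to $e_1 \in \R^{d-1}$, the conditioning restricts only the coordinate $g^{(i)}_1$ (via $-\tau < g^{(i)}_1 < 0$) and the first coordinate of $g^{(i)}_{[-1]}$ (via $g^{(i)}_{[-1],1}\cdot\|y_{[-1]}\|_2 > \eta$); by independence of Gaussian coordinates, the remaining $d-2$ coordinates of each row remain i.i.d.\ $\Nc(0,d^{-1})$ and independent of the conditioned coordinate. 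With independence of the entries established, Lemma 3.4.2 of \cite{vershynin2018high} then bounds the vector Orlicz norm $\|g^{(i)}_{[-1]}\|_{\psi_2}$ by a universal constant times the maximum entrywise norm $K$.

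Applying Theorem 4.7.1 then yields the claim for each realization of $\SSS$ of each fixed cardinality, hence unconditionally. The main obstacle to flag is the conditioning bookkeeping in the first step --- one must argue carefully that conditioning on $\SSS$ does not destroy either the independence across rows or the Gaussianity and mutual independence of the $d-2$ unconstrained coordinates within each row --- after which the proof reduces to an off-the-shelf invocation of sub-Gaussian covariance concentration.
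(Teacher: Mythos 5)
Your approach is correct and is essentially the same as the paper's: the lemma is indeed a sub-Gaussian second-moment concentration bound, and the paper itself proves it by running the $\epsilon$-net $+$ Bernstein argument that underlies these results in \cite{vershynin2018high}. Your bookkeeping about why conditioning on $\SSS$ preserves row-independence and the coordinate structure is sound and actually spelled out more carefully than in the paper.

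One caveat worth flagging: the result you quote as ``Theorem 4.7.1'' does not actually appear in that form. Theorem 4.7.1 of \cite{vershynin2018high} is an \emph{expectation} bound of the form $\E\|\Sigma_N - \Sigma\|\le CK^2(\sqrt{n/N}+n/N)\|\Sigma\|$, with $K$ defined as the ratio of $\psi_2$ to $L^2$ norms along directions, and the high-probability analogue (Exercise 4.7.3) likewise carries a $\|\Sigma\|$ factor and that different normalization of $K$. The form $K^2\max\{\delta,\delta^2\}$ with $\delta=C(\sqrt{(d-1)/|\SSS|}+t/\sqrt{|\SSS|})$ and $K$ the entrywise $\psi_2$ norm is exactly the intermediate estimate established inside the proof of Theorem~4.6.1, which is why the paper reproduces that proof (with $I$ replaced by $\Sigma$) rather than citing a theorem statement. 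In this application the discrepancy is cosmetic --- here $\|\Sigma\|=\alpha\asymp d^{-1}\asymp K^2$, so either formulation delivers the same scaling --- but a black-box invocation of ``Theorem 4.7.1'' would require reconciling these normalizations before the claimed inequality follows as stated.
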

\begin{proof}
This proof is almost verbatim that of Theorem 4.6.1 in \cite{vershynin2018high} and we only include it here for completeness. Let \(\Nc \subset S^{d-2}\) be a \(1/4\)-net. By definition
\begin{align*}
    \left\| |\SSS|^{-1} \tilde{G}_y^T \tilde{G}_y - \Sigma\right\|_{op}^2 &= \sup_{\|z\|_2 = 1}\left\| |\SSS|^{-1} \|\tilde{G}_y z\|_2^2 - z^T \Sigma z\right\|_{2}^2\\
    &\leq 2 \sup_{z\in \Nc} \left\| |\SSS|^{-1} \|\tilde{G}_y z\|_2^2 - z^T \Sigma z\right\|_{2}^2.
\end{align*}
where the last line follows by Lemma 4.4.1 in \cite{vershynin2018high}. We will use a union bound to control this quantity. To that end, fix \(z \in S^{d-2}\). We can expand this random variable as
\begin{align*}
    \left\| |\SSS|^{-1} \|\tilde{G}_y z\|_2^2 - z^T \Sigma z\right\|_{2}^2 = |\SSS|^{-1} \sum_{i\in \SSS} z^T g^{(i)}_{[-1]} g^{(i)T}_{[-1]} z - z^T \Sigma z.
\end{align*}
The random variable \(g^{(i)T}_{[-1]} z\) is sub-gaussian with \(\|g^{(i)T}_{[-1]} z\|_{\psi_2} \leq \max_{j \in \{2, \hdots, d}\} \|g^{(i)}_{j}\|_{\psi_2} = K\) since the entries of \(g^{(i)}\) are independent. Therefore, the random variables \(z^T g^{(i)}_{[-1]} g^{(i)T}_{[-1]} z - z^T \Sigma z\) are sub-exponential with \(\|z^T g^{(i)}_{[-1]} g^{(i)T}_{[-1]} z - z^T \Sigma z\|_{\psi_1} \leq CK^2\). Using Bernstein's inequality \cite{vershynin2018high} we get
\begin{align*}
    \P\left( \left| |\SSS|^{-1} \sum_{i\in\SSS} z^T g^{(i)}_{[-1]} g^{(i)}_{[-1]} z - z^T \Sigma z \right| \geq \eps/2 \right)\\
    \leq 2\exp\left(-c \min\left\{ \frac{\eps^2}{K^4}, \frac{\eps}{K^2} \right\} |\SSS| \right).
\end{align*}
Setting \(\eps/K^2 = \max\{\delta, \delta^2\}\) and recalling the definition of \(\delta := C\left( \sqrt{\frac{d-1}{|\SSS|}} + \frac{t}{\sqrt{|\SSS|}} \right)\), this bound reduces to
\begin{align*}
     \P\left( \left| |\SSS|^{-1} \sum_{i\in\SSS} z^T g^{(i)}_{[-1]} g^{(i)T}_{[-1]} z - z^T \Sigma z \right| \geq \frac{1}{2}K^2 \max\{\delta, \delta^2\} \right)\\
     \leq 2\exp\left( -c C (d-1 + t^2)\right).
\end{align*}
To get a uniform bound over the net, we remark that \(|\Nc| \leq 9^{d-2}\), therefore
\begin{align*}
    \P\left( \max_{z\in \Nc} \left| |\SSS|^{-1} \sum_{i\in\SSS} z^T g^{(i)}_{[-1]} g^{(i)T}_{[-1]} z - z^T \Sigma z \right| \geq \frac{1}{2}K^2 \max\{\delta, \delta^2\} \right)\\
    \leq 9^{d-2} 2\exp\left( -c C (d-1 + t^2)\right) \leq 2\exp\left( -c t^2\right),
\end{align*}
provided \(C\) in the definition of \(\delta\) is at least \(\log(9)/c\).
\end{proof}

\bibliographystyle{IEEEtran}
\bibliography{bib}

\end{document}